\newtheorem{thm}{Theorem}[section]
\newtheorem{lem}[thm]{Lemma}
\newtheorem*{ram}{Ramsey's Theorem}
\def\HH{{ \mathcal{H}}}
\title{A Ramsey-type theorem for the matching number regarding connected graphs}
\author{
Ilkyoo Choi\thanks{
Ilkyoo Choi was supported by Basic Science Research Program through the National Research Foundation of Korea (NRF) funded by the Ministry of Education (NRF-2018R1D1A1B07043049), and also by Hankuk University of Foreign Studies Research Fund.
Department of Mathematics, Hankuk University of Foreign Studies, Yongin-si, Gyeonggi-do, Republic of Korea.
E-mail: \texttt{ilkyoo@hufs.ac.kr}
}
\and
Michitaka Furuya\thanks{Michitaka Furuya was supported by JSPS KAKENHI Grant number JP18K13449.
College of Liberal Arts and Sciences, Kitasato University, Sagamihara, Kanagawa 252-0373, Japan.
E-mail: \texttt{michitaka.furuya@gmail.com}}
\and
Ringi Kim\thanks{Ringi Kim was supported by the National Research Foundation of Korea (NRF) grant funded by the Korea government (MSIT)(NRF-2018R1C1B6003786)
Department of Mathematical Sciences, KAIST, Daejeon, Republic of Korea.
E-mail: \texttt{ringikim2@gmail.com}
}
\and
Boram Park\thanks{
Boram Park was supported by Basic Science Research Program through the National Research Foundation of Korea (NRF) funded by the Ministry of Science, ICT and Future Planning (NRF-2018R1C1B6003577).
Department of Mathematics, Ajou University, Suwon-si, Gyeonggi-do, Republic of Korea.
E-mail: \texttt{borampark@ajou.ac.kr}
}
}
\begin{document}

\maketitle

\begin{abstract}
A major line of research is discovering Ramsey-type theorems, which are results of the following form: given a graph parameter $\rho$, 
every graph $G$ with sufficiently large $\rho(G)$
contains a ``well-structured'' induced subgraph $H$ with large $\rho(H)$.
The classical Ramsey's theorem deals with the case when the graph parameter under consideration is the number of vertices; there is also a Ramsey-type theorem regarding connected graphs.

Given a graph $G$, the matching number and the induced matching number of $G$ is the maximum size of a matching and an induced matching, respectively, of $G$.
In this paper, we formulate Ramsey-type theorems for the matching number and the induced matching number regarding connected graphs.
Along the way, we obtain a Ramsey-type theorem for the independence number regarding connected graphs as well.
\end{abstract}

\section{Introduction}\label{sec1}
For a positive integer $n$, let $[n]$ denote the set $\{1,\ldots,n\}$.
All graphs considered in this paper are undirected, finite, and simple.
As usual, given a graph $G$, let $V(G)$ and $E(G)$ denote the vertex set and edge set, respectively, of $G$.
For an edge $e$, we will slightly abuse notation and let $V(e)$ denote the set of both ends of $e$.
We use $P_n$, $K_n$, and $K_{n,m}$ to denote a path on $n$ vertices, a complete graph on $n$ vertices, and a complete bipartite graph where each of the two parts has $n$ and $m$ vertices, respectively.

Ever since the classical Ramsey's theorem \cite{Ramsey} appeared in 1930, it has been a central topic in combinatorics and graph theory as it provides a fundamental link between discrete mathematics and other branches of mathematics.
Ramsey's theorem guarantees that a graph on sufficiently many vertices contains either a large complete graph or a large edgeless graph (as an induced subgraph).
However, when we restrict our attention to connected graphs, Ramsey's theorem is not quite satisfying since an edgeless graph is not connected.
Nonetheless, there exists a well-known Ramsey-type theorem for connected graphs, which we state as Theorem~\ref{lem2.0} (see~\cite{D});
it states that a connected graph on sufficiently many vertices contains a long path, a large complete graph, or a large star as an induced subgraph.

\begin{ram}[\cite{Ramsey}]
For a positive integer $n$, there exists an integer $N_0(n)$ such that
every graph on at least $N_0(n)$ vertices contains either $K_n$ or $\overline{K_n}$ as an induced subgraph.
\end{ram}

\begin{thm}[{{\cite[Proposition~9.4.1]{D}}}]
\label{lem2.0}
For a positive integer $n$, there exists an integer $N_1(n)$ such that every connected graph on at least $N_1(n)$ vertices contains $P_n$, $K_{n}$, or $K_{1,n}$ as an induced subgraph.
\end{thm}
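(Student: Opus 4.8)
The plan is to treat the clique and the star as the two ``local'' obstructions and to show that forbidding both of them forces a globally path-like structure. Concretely, I will assume that the connected graph $G$ contains neither $K_n$ nor an induced $K_{1,n}$, and then produce an induced $P_n$ whenever $|V(G)|$ is large enough; since a graph that does contain $K_n$ or induced $K_{1,n}$ already satisfies the conclusion, this proves the theorem.

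The first step---and the key one---is to bound the maximum degree of $G$ by applying Ramsey's Theorem locally to neighborhoods. Fix a vertex $v$ and consider the subgraph $G[N(v)]$ induced on its neighbors. If $|N(v)| \ge N_0(n)$, then Ramsey's Theorem applied to $G[N(v)]$ yields an induced $K_n$ or an induced $\overline{K_n}$. In the first case, adding $v$ shows $G \supseteq K_n$, a contradiction; in the second case, an independent set of $n$ neighbors of $v$ together with $v$ is exactly an induced $K_{1,n}$, again a contradiction. Hence $|N(v)| < N_0(n)$ for every $v$, so $\Delta(G) \le N_0(n)-1 =: D$.

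The second step is a routine counting argument exploiting that a connected graph of bounded degree on many vertices must have large diameter. I would pick any vertex $r$ and run a breadth-first search: the ball of radius $\rho$ around $r$ contains at most $1 + D + D^2 + \cdots + D^{\rho}$ vertices. Thus if $|V(G)|$ exceeds $1 + D + \cdots + D^{n-2}$, the eccentricity of $r$ is at least $n-1$, so some vertex $w$ lies at distance at least $n-1$ from $r$. A shortest $r$--$w$ path then has at least $n$ vertices and, being shortest, admits no chords; it is therefore an induced path containing $P_n$. Setting $N_1(n) := 2 + \sum_{i=1}^{n-2} \bigl(N_0(n)-1\bigr)^{i}$ makes both steps go through simultaneously.

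I expect the first step to be the crux. The nonobvious point is that forbidding a clique \emph{and} an induced star is equivalent, via a single application of Ramsey's Theorem to one neighborhood, to a uniform degree bound; once this is in hand, everything reduces to the elementary observation that shortest paths are induced. The remaining work is purely bookkeeping in the choice of $N_1(n)$, ensuring that the ball-size estimate forces a path of the required length.
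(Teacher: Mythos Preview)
Your argument is correct: bounding $\Delta(G)$ via Ramsey on neighborhoods and then forcing a long shortest (hence induced) path by a ball-growth estimate is precisely the standard proof. Note, however, that the paper does not supply its own proof of this statement; it merely quotes the result from Diestel (Proposition~9.4.1), where the proof is essentially the same as yours. One trivial remark: in your first step you do not need to ``add $v$''---an induced $K_n$ inside $G[N(v)]$ is already an induced $K_n$ in $G$; adding $v$ gives $K_{n+1}$, which of course also suffices.
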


It is natural to seek Ramsey-type theorems for other graph parameters.
Hence, a popular research problem in this area is to discover Ramsey-type theorems, which are statements of the following form:
given a graph parameter $\rho$, every graph $G$ with sufficiently large $\rho(G)$ contains a ``well-structured'' induced subgraph $H$ with large $\rho(H)$.
See~\cite{2016BoPiRaSz,F2018,matching,lozin2017, 2012ALR} for various recent results in this direction, some with applications to algorithmic results.

The \emph{matching number} and the \emph{induced matching number} of a graph $G$, denoted by $\alpha'(G)$ and $\alpha''(G)$, is the maximum size of a matching and an induced matching, respectively, of $G$.
We remark that Dabrowski, Demange, and Lozin~\cite{matching} (also in~\cite{lozin2017}) considered Ramsey-type results for the  matching number, yet,
their list of guaranteed 
graphs contains disconnected graphs since they do not restrict their attention to connected graphs.

This paper sheds lights on a Ramsey-type theorem for the induced matching number regarding connected graphs.
We use the aforementioned result to obtain a Ramsey-type theorem  for the matching number regarding connected graphs.
Along the way, we obtain a Ramsey-type theorem  for the independence number regarding connected graphs.

In order to state our main theorem, we name four classes of graphs.
See Figure~\ref{fig:beta_tilde}.
\begin{figure}[h!]
\begin{center}
\includegraphics[width=14.5cm]{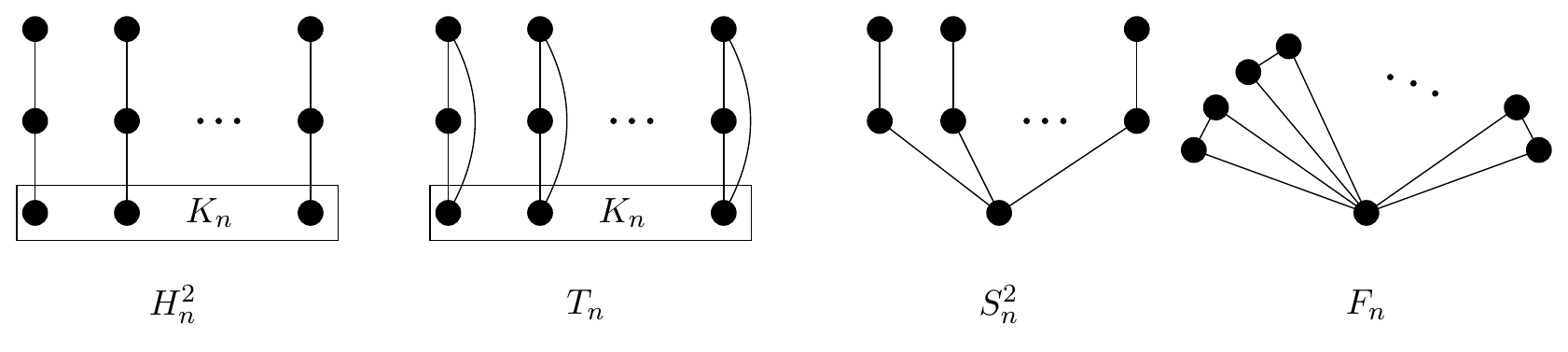}
\caption{The graphs $H_n^2$, $T_n$, $S_n^2$, and $F_n$.}
\label{fig:beta_tilde}
\end{center}
\end{figure}

\begin{itemize}
\item  $H_n^{l}$:
The graph obtained from the complete graph on $n$ vertices $v_1,\ldots,v_n$ by identifying $v_i$ and an end vertex of a path of length $l$ for each $i$.

\item $T_n$:
The graph obtained from the complete graph on $n$ vertices $v_1,\ldots,v_n$
by identifying $v_i$ and a vertex of a 3-cycle for each $i$.

\item $S_n^{l}$:
The graph obtained from $H_n^l$ by contracting the complete subgraph on $n$ vertices into a single vertex.

\item $F_n$:
The graph obtained from $T_n$ by contracting the complete subgraph on $n$ vertices into a single vertex.

\end{itemize}

We now state our main theorem, which is a Ramsey-type theorem  for the induced matching number regarding connected graphs.

\begin{thm}\label{THM:inducedmatching}
For a positive integer $n$, there exists an integer $N$ such that every connected graph $G$ with $\alpha''(G) \ge N$ contains $P_n$, $H_n^{2}$, $T_n$, $S_n^{2}$, or $F_n$ as an induced subgraph.
\end{thm}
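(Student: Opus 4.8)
The plan is to start from a huge induced matching and analyse how its edges are glued together by the rest of $G$, reducing everything to the connected Ramsey theorem (Theorem~\ref{lem2.0}) together with repeated applications of Ramsey's Theorem. Fix a function $N=N(n)$ growing fast enough (a tower in $n$) to absorb the many Ramsey steps below, and let $\{e_i=x_iy_i : i\in[N]\}$ be an induced matching of $G$. Contract each $e_i$ to a single vertex $z_i$; since $G$ is connected and the matching is induced, the resulting graph $\widehat{G}$ is connected and $Z=\{z_1,\dots,z_N\}$ is an independent set of size $N$ in $\widehat{G}$. The key observation is that, after contracting their matching edges, the five target graphs collapse to exactly three shapes: $P_n$ stays a path, both $S_n^2$ and $F_n$ become a star $K_{1,n}$, and both $H_n^2$ and $T_n$ become a clique with one pendant vertex at each clique vertex. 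So it suffices to locate one of these three shapes in $\widehat{G}$ \emph{anchored on $Z$} (with the star's leaves, respectively the pendant vertices, lying in $Z$) and then to expand the vertices of $Z$ back into edges.

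The heart of the argument is a rooted, independence-flavoured version of Theorem~\ref{lem2.0}: a connected graph carrying a large independent set $Z$ contains an induced $P_m$, an induced star $K_{1,m}$ whose leaves lie in $Z$, or an induced clique-with-pendants in which the pendant vertices lie in $Z$. Stated for the independence number alone, this is the Ramsey-type theorem for $\alpha$ promised in the introduction. I would prove it by taking a minimal subtree $R$ of a spanning tree of $\widehat{G}$ containing $Z$, so that every leaf of $R$ lies in $Z$, and then running the standard tree trichotomy on $R$: a tree with enormously many leaves contains either a long path or a vertex of large degree. A long path yields $P_m$. A high-degree vertex $b$ either sees many elements of $Z$ directly, giving the $Z$-anchored star, or is joined to them through connector vertices, in which case I apply Ramsey's Theorem to the set of first connectors on the branches: an independent subset again produces a $Z$-anchored star (after truncating the legs), while a clique subset produces the clique core, its pendants supplied by the branches toward $Z$.

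Having found one of the three anchored shapes, I expand each used $z_i$ back into its edge $x_iy_i$ and finish with a pigeonhole/Ramsey step that decides, uniformly over the surviving indices, whether the relevant core or centre vertex is adjacent to one or to both endpoints of $e_i$. In the star case this splits into the spider $S_n^2$ (one endpoint, giving a length-$2$ leg $c-x_i-y_i$) and the windmill $F_n$ (both endpoints, giving a triangle); in the clique-core case it splits into $H_n^2$ and $T_n$; the path case simply yields $P_n$. Throughout, the induced-matching hypothesis is exactly what guarantees there are no edges between distinct $e_i$ and $e_j$, which is what keeps the legs, respectively the triangles, pairwise non-interfering.

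The main obstacle is ensuring that the extracted subgraph is genuinely \emph{induced} in $G$. Both the un-contraction step and the choice of connecting paths can create unwanted chords: edges from the hub or a core vertex to the wrong matching edge, edges between connectors on different branches, or chords of the long path. I would remove these by iterating Ramsey's Theorem on the connector and core vertices, colouring tuples of indices by the full adjacency pattern of the associated vertices and passing each time to a large monochromatic subfamily on which the pattern is constant, and then arguing that the only constant pattern compatible with the target shape is the chordless one. Since the number of features to be cleaned is bounded in terms of $n$, each costing one Ramsey application, choosing $N$ large enough at the outset absorbs all of them. Detecting the clique core and simultaneously controlling all these chords while retaining $n$ usable matching edges is where essentially all the difficulty lies; producing one of the three coarse shapes is comparatively routine given Theorem~\ref{lem2.0} and Ramsey's Theorem.
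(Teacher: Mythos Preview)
Your overall architecture matches the paper's exactly: contract the induced matching to an independent set $Z$ in $\widehat G$, prove a connected Ramsey theorem for the independence number yielding $P_m$, a $Z$-anchored $K_{1,m}$, or a $Z$-anchored $H_m^1$, then expand back and pigeonhole on whether the core vertex sees one or both endpoints of each $e_i$. The paper formalises the middle step as Theorem~\ref{THM:independence} and does the expansion exactly as you describe.

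Where you diverge is in how the intermediate independence-Ramsey theorem is proved. The paper's device is much simpler than your spanning-tree analysis: before doing anything, delete from $G$ (and hence from $\widehat G$) every vertex outside the matching that is not a cut-vertex; this preserves connectivity and the induced matching, and afterwards \emph{every} vertex outside $Z$ is a cut-vertex. Then one simply applies Theorem~\ref{lem2.0} to $\widehat G$ (which has at least $|Z|$ vertices). If a $K_m$ appears, all but one of its vertices lie outside $Z$, hence are cut-vertices of $\widehat G$ but not of the clique, and Lemma~\ref{LEM:increasing} hands you the pendant vertices for free --- no iterated Ramsey cleanup is needed. The same cut-vertex trick is reused in the case analysis after expanding back.

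Your alternative route via a Steiner subtree and Ramsey on connectors is plausible but has a genuine gap as stated. In the ``independent connectors'' subcase you obtain an induced star $b;c_1,\dots,c_m$ whose leaves are the connectors, not elements of $Z$; ``truncating the legs'' does not convert this into a $Z$-anchored star, and recursing down the branches is not obviously bounded. Moreover, in the clique-connector subcase the pendant candidates coming from the tree branches may have uncontrolled adjacencies in $\widehat G$ to other clique vertices and to each other; you gesture at iterated Ramsey to clean this, but the specific colouring and the reason a favourable colour class must occur are not given. These are exactly the complications the paper's cut-vertex reduction sidesteps.
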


Note that each of the guaranteed graphs is a connected graph with induced matching number at least $\frac{n-1}{3}$.
Using the above theorem, we also obtain a Ramsey-type theorem for the  matching number regarding connected graphs.

\begin{thm}
\label{THM:matching}
For a positive integer $n$, there exists an integer $N$ such that every connected graph $G$ with $\alpha'(G) \ge N$ contains $P_n$, $K_n$, $K_{n,n}$, $S_n^{2}$, or $F_n$ as an induced subgraph.
\end{thm}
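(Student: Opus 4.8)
The plan is to deduce Theorem~\ref{THM:matching} from Theorem~\ref{THM:inducedmatching} by a two-layer Ramsey argument that converts a large matching into either a large \emph{induced} matching (where Theorem~\ref{THM:inducedmatching} applies) or a rigid adjacency pattern forcing a large clique or complete bipartite graph. Concretely, I would fix a matching $M=\{e_1,\dots,e_k\}$ of $G$ with $k=\alpha'(G)$ and build an auxiliary graph $A$ on the vertex set $M$ in which $e_i$ and $e_j$ are adjacent precisely when $G$ contains at least one edge between $V(e_i)$ and $V(e_j)$. Applying Ramsey's Theorem to $A$, once $N$ is chosen large enough $A$ has either a large independent set or a large clique, and I would treat these two outcomes separately.

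If $A$ has a large independent set, then the corresponding edges of $M$ have no edges of $G$ between distinct pairs $V(e_i)$, so they form an induced matching; hence $\alpha''(G)$ is at least the size of this set. Choosing it to exceed the threshold of Theorem~\ref{THM:inducedmatching} applied with the same parameter $n$, that theorem supplies an induced $P_n$, $H_n^{2}$, $T_n$, $S_n^{2}$, or $F_n$. The one extra observation here is that the $n$ original vertices of $H_n^{2}$ (and of $T_n$) induce a $K_n$, so the outcomes $H_n^{2}$ and $T_n$ may be replaced by $K_n$; thus every sub-case lands on the target list $P_n, K_n, S_n^{2}, F_n$.

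If instead $A$ has a large clique, the chosen edges of $M$ are pairwise joined by an edge of $G$. Fixing a labeling $V(e_i)=\{a_i,b_i\}$ together with an ordering, I would color each pair $i<j$ by the nonempty subset of $\{a_ia_j,\,a_ib_j,\,b_ia_j,\,b_ib_j\}$ that is present in $G$; this uses at most $15$ colors, so a second application of Ramsey's Theorem produces a large sub-family on which this pattern $S$ is constant. A short case analysis then finishes the argument: if $a_ia_j\in S$ for all pairs (or symmetrically $b_ib_j\in S$) the vertices $a_i$ (resp.\ $b_i$) are pairwise adjacent and induce a large clique; otherwise $S\subseteq\{a_ib_j,\,b_ia_j\}$, the sets $\{a_i\}$ and $\{b_i\}$ are each independent, and together they span a bipartite graph that is complete when $|S|=2$ and a half-graph when $|S|=1$. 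The crucial point is that a half-graph with parts of size $m$ still contains an induced $K_{\lfloor (m+1)/2\rfloor,\,\lfloor (m+1)/2\rfloor}$, so every sub-case yields a large induced $K_n$ or $K_{n,n}$.

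I expect the clique case to be the main obstacle: one must organize all sixteen adjacency patterns between two matching edges and, in particular, notice that the half-graph pattern---which is not itself on the target list---nonetheless contains a large induced complete bipartite graph. Once this is in hand, it remains only to set $N$ to be a two-color Ramsey number whose two inputs are the threshold $N'(n)$ coming from Theorem~\ref{THM:inducedmatching} and the $16$-color Ramsey number forcing a monochromatic clique of size $2n$ in the pattern coloring; propagating these constants through the two applications of Ramsey's Theorem is routine.
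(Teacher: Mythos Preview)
Your proposal is correct and follows essentially the same route as the paper: encode the adjacency pattern between pairs of matching edges by a bounded number of colours, apply multicolour Ramsey to obtain a uniform pattern, and then run the same case analysis (clique from $a_ia_j$ or $b_ib_j$, induced matching from the empty pattern feeding into Theorem~\ref{THM:inducedmatching}, and $K_{n,n}$ from the bipartite/half-graph patterns, with $H_n^2$ and $T_n$ absorbed into $K_n$). The only cosmetic difference is that the paper applies a single $16$-colour Ramsey step where you first peel off the empty pattern with a $2$-colour Ramsey and then use a $15$-colour Ramsey on the remainder; the arguments are otherwise identical.
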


Note that each of the guaranteed graphs is a connected graph with matching number at least $\frac{n-1}{2}$.

Before ending this section, we mention that Theorem~\ref{THM:matching} implies Ramsey-type theorems  for some other graph parameters regarding connected graphs.
Given a graph $G$, the \textit{vertex cover number}, denoted $\beta(G)$, is the minimum size of a subset $S$ of $V(G)$ such that $S\cap V(e)=\emptyset$ for all $e\in E(G)$.
It is not hard to see that every graph $G$ satisfies $\alpha'(G) \le \beta(G) \le 2\alpha'(G)$.
Also, for a graph $G$, the \textit{fractional matching number}, denoted $\alpha'_f(G)$, is the maximum of $\sum_{e\in E(G)}\varphi(e)$ over all functions $\varphi:E(G)\rightarrow [0,1]$ satisfying $\sum_{uv\in E(G)} \varphi(uv) \le 1$ for each vertex $v$.
It is known that $\alpha'(G)\le \alpha'_f(G)\le \frac{3}{2}\alpha'(G)$ (see \cite{CKO}).
Hence, by replacing $\alpha'(G)$ with $\beta(G)$ and $\alpha'_f(G)$ in Theorem~\ref{THM:matching}, we again have a Ramsey-type theorem for the vertex cover number and the fractional matching number, respectively, regarding connected graphs.

The paper is organized as follows. In Section~\ref{sec2}, we first prove a Ramsey-type theorem for the independence number regarding connected graphs, which will be used in our proof of Theorem~\ref{THM:inducedmatching}.
We prove Theorem~\ref{THM:matching} in Section~\ref{sec3}.
In Section~\ref{SEC:inducedpacking}, we discuss a generalization of our main result to the induced $\HH$-matching number.


\section{
A Ramsey-type theorem for the induced matching number regarding connected graph
}\label{sec2}
Our goal in this section is to prove Theorem~\ref{THM:inducedmatching}.
We start with a lemma that will come in handy later.

\begin{lem}
\label{LEM:increasing}
Let $G$ be a connected graph and let $T$ be a connected subgraph of $G$.
If $v_1,\ldots,v_m \in V(T)$ are cut-vertices of $G$, but are not cut-vertices of $T$,
then there exists an independent set $\{v_1',\ldots,v_m'\} \subseteq V(G)\setminus V(T)$ such that $N_{G}(v_i') \cap V(T) =\{v_i\}$ for all $i \in [m]$.
\end{lem}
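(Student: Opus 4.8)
The plan is to construct the vertices $v_1',\ldots,v_m'$ one at a time, using the two hypotheses on each $v_i$ separately, and then to verify independence at the end. Fix $i\in[m]$. Since $v_i$ is not a cut-vertex of the connected graph $T$, the graph $T-v_i$ is connected, so its vertex set lies entirely within a single component of $G-v_i$, which I will call $D_i$. On the other hand, $v_i$ is a cut-vertex of $G$, so $G-v_i$ has at least one further component $D_i'\ne D_i$. Because $G$ is connected, every component of $G-v_i$ must contain a neighbor of $v_i$ (any path leaving that component in $G$ has to pass through $v_i$); I would take $v_i'$ to be a neighbor of $v_i$ lying in $D_i'$.

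Next I would check that this choice meets the neighborhood requirement. Since $V(T-v_i)\subseteq D_i$ and $D_i'\ne D_i$, the component $D_i'$ contains no vertex of $T$; hence every neighbor of $v_i'$ other than $v_i$ lies in $D_i'\setminus V(T)$. As $v_i'v_i\in E(G)$, this yields $N_G(v_i')\cap V(T)=\{v_i\}$, and in particular $v_i'\notin V(T)$. Distinctness of the $v_i'$ is then immediate: if $v_i'=v_j'$ we would get $\{v_i\}=N_G(v_i')\cap V(T)=\{v_j\}$, forcing $v_i=v_j$. (If $|V(T)|=1$ the hypotheses force $m=1$ and the statement is trivial, so I may assume $|V(T)|\ge 2$, whence each $T-v_i$ is nonempty and $D_i$ is well defined.)

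The step I expect to be the crux is independence, so I would spend the most care there; fortunately it falls out of the component structure of $G-v_i$. Suppose toward a contradiction that $v_i'v_j'\in E(G)$ for some $i\ne j$. Note first that $v_i'\ne v_i$ and $v_j'\ne v_i$, since neither $v_i'$ nor $v_j'$ lies in $V(T)$ while $v_i\in V(T)$. Now $v_j$ is a vertex of $T-v_i$ (as $v_j\in V(T)$ and $v_j\ne v_i$), and $v_j'v_j\in E(G)$ by the neighborhood condition; since this edge survives in $G-v_i$, the vertex $v_j'$ lies in the component $D_i$ of $G-v_i$ that contains $T-v_i$. But the edge $v_i'v_j'$ also survives in $G-v_i$, so $v_i'$ lies in the same component of $G-v_i$ as $v_j'$, namely $D_i$. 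This contradicts the choice $v_i'\in D_i'\ne D_i$.

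Hence no such edge exists, and $\{v_1',\ldots,v_m'\}$ is an independent subset of $V(G)\setminus V(T)$ with $N_G(v_i')\cap V(T)=\{v_i\}$ for all $i\in[m]$, as required. The only conceptual ingredient beyond bookkeeping is the repeated use of the elementary fact that a connected subgraph of $G-v_i$ sits inside one component and that each component of $G-v_i$ meets $N_G(v_i)$; everything else is a direct consequence of the two cut-vertex hypotheses.
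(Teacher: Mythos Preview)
Your proof is correct and follows essentially the same idea as the paper's: for each $i$ you pick $v_i'$ in a component of $G-v_i$ disjoint from $V(T)$, which immediately gives $N_G(v_i')\cap V(T)=\{v_i\}$. The only cosmetic difference is in the independence step: the paper observes once and for all that these components are in fact pairwise distinct components of $G\setminus V(T)$ (since each meets $V(T)$ only at its own $v_i$), so non-adjacency of the $v_i'$ is immediate, whereas you argue the same thing directly by pulling a hypothetical edge $v_i'v_j'$ back into $G-v_i$.
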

\begin{proof}
For each $i\in[m]$,
since $v_{i}$ is a cut-vertex of $G$ but not a cut-vertex of $T$,
there exists a component $D_{i}$ of $G-v_{i}$ that does not contain vertices of $T$.
Note that $D_i$ is a component of $G\setminus V(T)$ where $v_i$ is the only neighbor of $V(D_i)$ in $V(T)$. Therefore, $D_1,\ldots,D_m$ are distinct components of $G\setminus V(T)$.

Let $v'_{i}\in N_{G}(v_{i})\cap V(D_{i})$. 
Then, we know $N_{G}(v'_{i})\cap V(T)=\{v_{i}\}$ for all $i\in[m]$.
In particular, since $V(D_1),\ldots,V(D_m)$ are pairwise disjoint, we conclude that $\{v_1',\ldots,v_m'\}$ is the desired independent set.
\end{proof}

We now prove a Ramsey-type theorem (Theorem~\ref{THM:independence}) for the independence number regarding connected graphs.
Note that all graphs that are guaranteed in Theorem~\ref{THM:independence} have independence number at least $n\over 2$.
This theorem will be used in the proof of  Theorem~\ref{THM:inducedmatching}.
We remark that we are not interested in optimizing constants, so the existence of $N_1$ in Theorem~\ref{lem2.0} is sufficient for our purposes.

\begin{thm}
\label{THM:independence}
For a positive integer $n$, every connected graph $G$ with independence number at least $N_1(n+1)$ contains $P_n$, $H_n^1$, or $K_{1,n}$ as an induced subgraph.
\end{thm}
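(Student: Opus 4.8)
The plan is to reduce to the connected Ramsey theorem applied to $G$ itself and then repair its one problematic output using Lemma~\ref{LEM:increasing}. Since an independent set is a set of vertices, the independence number of $G$ is at most $|V(G)|$, so $|V(G)| \ge N_1(n+1)$, and Theorem~\ref{lem2.0} produces an induced $P_{n+1}$, $K_{n+1}$, or $K_{1,n+1}$ in $G$. An induced $P_{n+1}$ already contains an induced $P_n$, and an induced $K_{1,n+1}$ already contains an induced $K_{1,n}$, so both of these cases finish at once. The whole difficulty is thus concentrated in the case where $G$ contains an induced $K_{n+1}$, say on a vertex set $K=\{u_0,\dots,u_n\}$: here I must manufacture the pendant edges needed to exhibit an induced $H_n^1$.

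This is exactly what Lemma~\ref{LEM:increasing} is built to do. For any $C\subseteq K$ the subgraph $T=G[C]$ is complete, hence has no cut-vertex (as $|C|\ge 2$), so no vertex of $C$ is a cut-vertex of $T$. Consequently, if every vertex of $C$ happens to be a cut-vertex of $G$, Lemma~\ref{LEM:increasing} yields an independent set $\{v'_c:c\in C\}\subseteq V(G)\setminus C$ with $N_G(v'_c)\cap C=\{c\}$ for every $c$, and then $G[C\cup\{v'_c:c\in C\}]$ is precisely an induced $H_{|C|}^1$. In particular, if at least $n$ of $u_0,\dots,u_n$ are cut-vertices of $G$, then taking $C$ to be $n$ of them produces an induced $H_n^1$, and we are done.

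The main obstacle is the complementary situation, in which at least two vertices of $K$ fail to be cut-vertices of $G$ --- most starkly when $K$ lies inside a $2$-connected block of $G$, so that Lemma~\ref{LEM:increasing} has nothing to attach. Here I would drop the clique and instead argue directly from a fixed maximum independent set $I$ via the dichotomy: if some vertex has at least $n$ neighbors in $I$, then, since $I$ is independent, that vertex together with $n$ of those neighbors induces a $K_{1,n}$; otherwise every vertex has fewer than $n$ neighbors in $I$, and I would grow an induced path greedily, alternating between vertices of $I$ and \emph{private} connecting vertices lying outside $I$, the low-degree-into-$I$ condition being exactly what lets each extension avoid chords back to the part already built. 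The delicate point, and where I expect the real work to be, is the bookkeeping that forces this greedy induced path to reach length $n$ before getting stuck, together with the verification that the bound $N_1(n+1)$ on the independence number is enough to fuel it; I would organize this along the block--cut tree of $G$, which is also the natural device for guaranteeing that the connecting vertices are genuinely private.
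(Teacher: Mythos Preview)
Your setup and handling of the $P_{n+1}$ and $K_{1,n+1}$ outputs are fine, and your use of Lemma~\ref{LEM:increasing} when the clique vertices happen to be cut-vertices of $G$ is exactly right. The gap is that the ``complementary situation'' you worry about never needs to arise: the paper's proof begins with a one-line reduction that eliminates it. Fix an independent set $U$ of size $N_1(n+1)$ and then repeatedly delete from $V(G)\setminus U$ any vertex that is not a cut-vertex of the current graph. Each such deletion keeps the graph connected and keeps $U$ intact, and any induced subgraph found afterwards is still induced in the original $G$; so you may assume from the outset that \emph{every} vertex outside $U$ is a cut-vertex of $G$. Now when Theorem~\ref{lem2.0} produces an induced $K_{n+1}$, at most one of its vertices lies in $U$ (since $U$ is independent), so dropping that one gives an induced $K_n$ all of whose vertices are cut-vertices of $G$, and Lemma~\ref{LEM:increasing} immediately yields $H_n^1$.

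Your proposed workaround for the bad case --- the greedy induced-path construction with private connectors organized along the block--cut tree --- is therefore unnecessary, and as you yourself acknowledge it is not actually carried out: you have not verified that the bound $N_1(n+1)$ on $\alpha(G)$ is enough to push the path to length $n$, and there is no evident reason it should be without essentially reproving Theorem~\ref{lem2.0}. The single missing idea is the preliminary pruning of non-cut-vertices outside $U$, after which the $K_{n+1}$ case collapses into the one you already know how to handle.
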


\begin{proof}
Let $U$ be an independent set of $G$ where $|U|=N_1(n+1)$.
We may assume that every vertex of $G \setminus U$ is a cut-vertex of $G$ by deleting non cut-vertices one by one in $V(G)\setminus U$.

Since $|V(G)|\geq |U|= N_1(n+1)$, it follows from Theorem~\ref{lem2.0} that $G$ contains $P_{n+1}$, $K_{n+1}$, or $K_{1,n+1}$ as an induced subgraph.
Since we are done if $G$ contains either $P_{n+1}$ or $K_{1,n+1}$, we may assume that $G$ contains $K_{n+1}$.

Since $U$ is an independent set, every complete subgraph of $G$ contains at most one vertex of $U$.
So, $G$ contains an induced subgraph $K$ isomorphic to $K_n$ that does not contain a vertex in $U$.
Let $v_1,\ldots,v_{n}$ be the vertices of $K$.
Note that $v_i$ is a cut-vertex of $G$ since $v_i\not\in U$.
However, since $K$ is a complete graph, $v_i$ is not a cut-vertex of $K$.
Hence, by Lemma~\ref{LEM:increasing}, we can extend $K$ to an induced subgraph of $G$ that is isomorphic to $H_n^1$.
This completes the proof.
\end{proof}

Now we are ready to prove Theorem~\ref{THM:inducedmatching}.

\begin{proof}[Proof of Theorem~\ref{THM:inducedmatching}]
We show that the theorem holds when $N=N_1(3n+2)$.

Let $M=\{e_1,\ldots,e_N\}$ be an induced matching of $G$.
We may assume that every vertex of $G$ that is not incident with an edge in $M$ is a cut-vertex by deleting non-cut-vertices one by one from 
$V(G)\setminus \left(\bigcup_{e_i\in M}V(e_i)\right)$.

Let $G'$ be the graph obtained from $G$ by contracting $e_i$ into a single vertex $u_{i}$ for each $i\in[N]$.
Since $M$ is an induced matching of $G$, the set $U:=\{u_{1}, \ldots ,u_{N}\}$ is an independent set of $G^{'}$.
Note that every vertex in $V(G')\setminus U$ is a cut-vertex of $G'$.
Now, by Theorem~\ref{THM:independence}, $G'$ contains  either $P_{3n+1}$, $H_{3n+1}^1$, or $K_{1,3n+1}$ as an induced subgraph.
\\
\\
{\bf Case 1:}  $G'$ contains $P_{3n+1}$ as an induced subgraph.

Let $P'$ be an induced path of length $3n$ in $G'$. 
We will show that $G$ contains an induced path of length at least $3n$.
For each $u_i\in V(P')\cap U$, expand $u_i$ back into $e_i$ in $G$.
If $e_i$ has an end $z_i$ adjacent to both neighbors of $u_i$ on $P'$ in $G'$, then we keep the end $z_i$ and delete the other end.
Note that no neighbor of $u_i$ on $P'$ in $G'$ is in $U$ since $U$ is an independent set.
If $e_i$ has no such ends, then we keep both ends.
It is not hard to see that the graph obtained above is an induced path, and its length is at least $3n$.
Therefore, $G$ contains $P_n$ as an induced subgraph.
\\
\\
{\bf Case 2:} $G'$ contains $H_{3n+1}^1$ as an induced subgraph.

Since $U$ is an independent set, $G'$ contains an induced subgraph $H$ isomorphic to $H_{3n}^1$ whose the maximum clique of $H$ does not intersect $U$;
in other words, $H$ consists of a clique $C$ and an independent set $I$ where $|C|=|I|=3n$, $C$ does not intersect $U$, and the edges between $C$ and $I$ form a (perfect) matching.

Suppose $I\cap U$ contains $2n$ vertices, and without loss of generality, assume $u_1,\ldots,u_{2n}\in I\cap U$.
Let $v_i$ be the vertex of $C$ adjacent to $u_i$ for $i\in [2n]$, and let $H'$ be the induced subgraph of $G'$ on $\{u_i: i\in[2n]\}\cup\{v_i: i\in[2n]\}$.
We consider the induced subgraph of $G$ obtained from $H'$ by expanding each vertex $u_i$ back into the edge $e_i$ for $i\in[2n]$.
By the Pigeonhole principle, in $\{e_1,\ldots,e_{2n}\}$ there exist $n$ edges $e_{i_1},\ldots,e_{i_n}$ such that for every $j\in [n]$ either both ends of $e_{i_j}$ are adjacent to $v_{i_j}$ or only one end of $e_{i_j}$ is adjacent to $v_{i_j}$.
The graph induced by 
$\bigcup_{j\in[n]} \left(\{v_{i_j}\}\cup V(e_{i_j})\right)$
is isomorphic to $T_n$ and $H_n^2$ in the former and latter, respectively.

So, we may assume that $I\setminus U$ contains $n$ vertices, and without loss of generality assume $v_1',\ldots,v_n'\in I\setminus U$.
Let $v_i$ be the vertex in $C$ adjacent to $v_i'$.
Let $H^*$ be the induced subgraph of $G'$ on $\{v_i: i\in [n]\}\cup\{v_i': i\in [n]\}$.
Since $v_i'\not\in U$, it is a cut-vertex of $G'$ but not a cut-vertex of $H^*$.
By Lemma~\ref{LEM:increasing}, there exists an independent set $I'$ of size $n$ in $G'\setminus V(H^*)$ 
such that the induced subgraph $H^{**}$ of $G'$ on $V(H^*)\cup I'$ is isomorphic to $H_n^2$. 
Now, consider the induced subgraph of $G$ obtained from $H^{**}$ by expanding every vertex in $I'\cap U$ back to its original edge in $G$.
This graph contains $H_n^2$ as an induced subgraph.
Therefore, $G$ contains $H_n^2$ as an induced subgraph.
\\
\\
{\bf Case 3:} $G'$ contains $K_{1,3n+1}$ as an induced subgraph.

Let $S$ be an induced subgraph of $G'$ isomorphic to $K_{1,3n+1}$.
Let $c$ be the center of $S$ and let $L$ be the set of leaves of $S$.

Suppose $c \in U$, so that every leaf of $S$ is not in $U$ since $U$ is an independent set.
Let $e$ be the edge of $G$ that was contracted to obtain $c$.
There exists an end $x$ of $e$ and a subset $L'\subseteq L$ of size $n$ such that $x$ is adjacent (in $G$) to every vertex in $L'$.
Let $L'=\{v_1, \ldots, v_n\}$, and let $S'$ be the induced subgraph of $G'$ on $\{c\} \cup L'$.
Since $v_i$ is a cut-vertex of $G'$ but not a cut-vertex of $S'$ for $i\in[n]$, it follows from Lemma~\ref{LEM:increasing} that there exists an independent set $I\subseteq V(G')\setminus V(S')$ of size $n$ such that $\{c\} \cup L' \cup I$ is an induced subgraph $S^*$ of $G'$ that is isomorphic to $S_n^2$.
Now, we consider the induced subgraph of $G$ obtained from $S^*$ by expanding every vertex in $V(S^*)\cap U$ back to its original edge.
This graph contains $S_n^2$ as an induced subgraph.
Therefore, $G$ contains $S_n^2$ as an induced subgraph.

So, we may assume $c\not\in U$.
If there are $n$ leaves of $S$ not in $U$, then, by the same argument as above, $G$ contains an induced subgraph isomorphic to $S_n^2$.
So, we may further assume that there exist $2n$ leaves of $S$ contained in $U$.
Without loss of generality, let $u_1,\ldots,u_{2n}$ be such vertices.
By the Pigeonhole principle, in $e_1,\ldots,e_{2n}$, there exist $n$ edges $e_{i_1},\ldots,e_{i_n}$ such that, for every $j\in [n]$, either both ends of $e_{i_j}$ are adjacent to $c$ or only one end of $e_{i_j}$ is adjacent to $c$.
Now, the graph induced by $\{c\}\cup\bigcup_{j\in [n]} V(e_{i_j})$ is isomorphic to $F_n$ and $S_n^2$ in the former and latter, respectively.

Therefore, $G$ contains $P_n$, $H_n^{2}$, $T_n$, $S_n^{2}$, or $F_n$ as an induced subgraph.
\end{proof}

\section{
A Ramsey-type theorem for the matching number regarding connected graphs
}
\label{sec3}

In this section, we prove Theorem~\ref{THM:matching} using Theorem~\ref{THM:inducedmatching}.
Given a positive integer $n$, let $R_k(n)$ be the minimum integer $N$ such that every $k$-edge-coloring of a complete graph on $N$ vertices has a monochromatic complete graph on $n$ vertices.
See \cite{Ramsey}.

\begin{proof}[Proof of Theorem~\ref{THM:matching}.]
Given an integer $n$, let $r$ be an integer satisfying the following: if a connected graph has an induced matching of size $r$, then it also contains  $P_n$, $H_n^{2}$, $T_n$, $S_n^{2}$, or $F_n$ as an induced subgraph; such an $r$ is guaranteed to exist by Theorem~\ref{THM:inducedmatching}.
Since $K_n$ is an induced subgraph of $H_n^{2}$ and $T_n$, it is sufficient to show that a graph with sufficiently large matching number contains
$K_n$, $K_{n, n}$, or an induced matching of size $r$. 

Suppose that a connected graph $G$ has a matching $M=\{x_1y_1,\ldots ,x_my_m\}$ where $m\ge R_{16}(2r)$ and $r\geq n$.
Let $H$ be a complete graph on $m$ vertices where each vertex $v_k$ of $H$ corresponds to the edge $x_ky_k\in M$, and for $i< j$, color each edge $v_iv_j$ of $H$ with the color $(a,b,c,d)$ where $a,b,c,d \in \{0,1\}$ in the following fashion:
\begin{itemize}
\item $a=1$ if and only if $x_ix_j$ is an edge of $G$,
\item $b=1$ if and only if $y_iy_j$ is an edge of $G$,
\item $c=1$ if and only if $x_iy_j$ is an edge of $G$,
\item $d=1$ if and only if $y_ix_j$ is an edge of $G$.
\end{itemize}
Now, $H$ is a 16-edge-colored complete graph on at least $R_{16}(2r)$ vertices, and by Ramsey's Theorem  there is a monochromatic complete graph $H'$ on $2r$ vertices.
We may assume that $V(H')=\{v_1,\ldots,v_{2r}\}$, and the edges of $H'$ are all colored with $(a,b,c,d)$.

If $a=1$, then for $i, j\in [2r]$, we know $x_ix_j$ exists in $G$,
so $G$ has a complete graph $K_{2r}$ as an induced subgraph.
Since the argument is symmetric for $b=1$, we may assume that $a=b=0$.

If $(a,b,c,d)=(0,0,0,0)$, then for $i, j\in[2r]$, we know neither $x_iy_j$ nor $x_jy_i$ exists in $G$, so the induced subgraph of $G$ on $\{x_i: i\in[r]\}\cup\{y_i: i\in[r]\}$ has an induced matching of size $r$.

If $(a,b,c,d)=(0,0,1,0)$, then for $i, j\in[2r]$ with $i<j$, we know all edges $x_iy_j$ exist in $G$.
Therefore, the induced subgraph of $G$ on $\{x_i: 1\le i\le r\}\cup\{y_j: r\leq j\leq 2r\}$ has $K_{r, r}$ as an induced subgraph.
Since $r\geq n$, we know $G$ has $K_{n, n}$ as an induced subgraph.
Since the argument is symmetric for $(a,b,c,d)=(0,0,0,1)$, the only remaining case is when $(a,b,c,d)=(0,0,1,1)$.

Now, when $(a,b,c,d)=(0,0,1,1)$, we know that for $i, j\in[2r]$, both $x_iy_j$ and $y_ix_j$ exist in $G$, so $G$ has a complete bipartite graph $K_{2r, 2r}$ as an induced subgraph.
This implies that $G$ has $K_{n, n}$ as an induced subgraph, and so the theorem is proven.
\end{proof}


\section{Concluding Remarks: Generalizations}\label{SEC:inducedpacking}

Theorem~\ref{THM:inducedmatching} is easily generalized by considering the notion of an induced $\HH$-matching.
Given a set $\HH$ of connected graphs, an {\it induced $\HH$-matching} of a graph $G$ is an induced subgraph of $G$ where each component is isomorphic to a graph in $\HH$.
The {\it induced $\HH$-matching number} of $G$, denoted $\alpha _{\HH}(G)$, is the maximum number of components of an induced $\HH$-matching of $G$.
Note that $\alpha _{\{P_{1}\}}(G)$ and $\alpha _{\{P_{2}\}}(G)$ are exactly the {\it independence number} $\alpha (G)$ and the {\it induced matching number} $\alpha''(G)$, respectively, of $G$.
Moreover, if $P_{1}\in \HH$, then it is not hard to see that $\alpha_{\HH} (G)=\alpha (G)$.

Let $\HH$ be a finite set of connected graphs such that $P_{s}\in \HH$ for some $s$.
One can derive a Ramsey-type theorem for the induced $\HH$-matching number regarding connected graphs, similar to Theorem~\ref{THM:inducedmatching}.
We first provide some definitions in order to state the theorem.

Let $H$ be a connected graph and let $X$ be a non-empty subset of $V(H)$.
For a positive integer $l$, an {\it $(H, X)$-broom of length $l$} is the graph obtained from a path of length $l-1$ where one endpoint of the path is adjacent to all vertices in $X$; the other endpoint of the path is called the {\it endpoint} of the broom.

\begin{figure}[h!]
\begin{center}
\includegraphics[width=14cm]{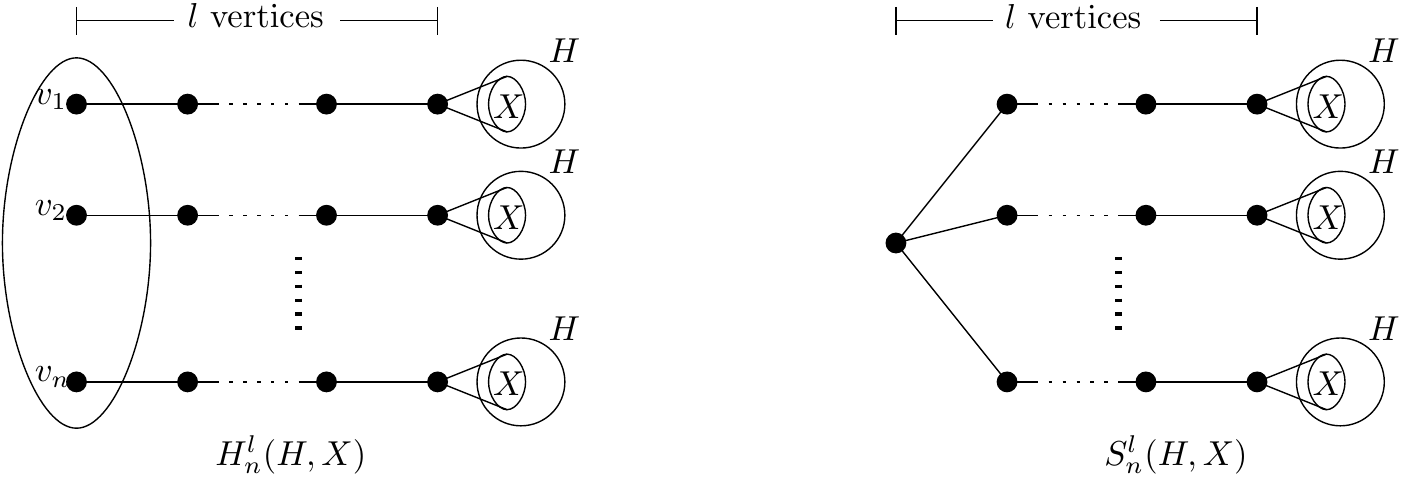}
\caption{An $(H,X)$-hairy graph and a $(H, X)$-star graph.}
\label{fig:hairy}
\end{center}
\end{figure}

We define two classes of graphs. See Figure~\ref{fig:hairy}.
In the following, $n$ and $l$ are positive integers, $H$ is a connected graph, and $X\subseteq V(H)$.
\begin{itemize}
\item \emph{$(H,X)$-hairy graph with width $n$ and length $l$, $H_n^l(H,X)$}: the graph obtained from the complete graph with $n$ vertices $v_1,\ldots,v_n$ by identifying $v_i$ and the endpoint of an $(H,X)$-broom of length $l$ for each $i$.

\item \emph{$(H,X)$-star with $n$ leaves and length $l$, $S_n^{l}(H,X)$}: the graph obtained from the disjoint union of $n$ $(H,X)$-brooms of length $l$ by identifying all $n$ endpoints of the $(H, X)$-brooms.
\end{itemize}

Generalizing our Theorem~\ref{THM:inducedmatching}, one can obtain the following Ramsey-type theorem for the induced $\HH$-matching number regarding connected graphs:

\begin{thm}\label{THM:general}
Given a positive integer $s\ge1$, let $\HH$ be a finite set of connected graphs where $P_s \in \HH$.
For every positive integer $n$, there exists an integer $N$ such that every connected graph $G$ with $\alpha_{\HH}(G)\ge N$ contains an induced subgraph isomorphic to one of the following:
\begin{itemize}
\item $P_n$,
\item $H_n^l(H,X)$ for some $l \le s$, $H\in \HH$ and $X \subseteq V(H)$, or
\item $S_n^l(H,X)$ for some $l \le s$, $H\in \HH$ and $X \subseteq V(H)$.
\end{itemize}
\end{thm}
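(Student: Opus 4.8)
\textbf{Proof proposal for Theorem~\ref{THM:general}.}

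The plan is to mimic the structure of the proof of Theorem~\ref{THM:inducedmatching}, replacing the single contracted edge of an induced matching by a contracted copy of a graph in $\HH$, and using the path $P_s \in \HH$ as a uniform ``label'' in order to deploy Theorem~\ref{THM:independence} as a black box. First I would fix a large induced $\HH$-matching $\{H_1, \ldots, H_N\}$ of $G$ (where $N$ is to be chosen), so that each $H_i$ is isomorphic to some member of $\HH$ and distinct components have no edges between them. As before, by deleting non-cut-vertices one at a time from $V(G) \setminus \bigcup_i V(H_i)$, I may assume that every vertex outside the induced $\HH$-matching is a cut-vertex of $G$. I then form $G'$ by contracting each component $H_i$ to a single vertex $u_i$; because the $\HH$-matching is induced, $U := \{u_1, \ldots, u_N\}$ is an independent set of $G'$, and every vertex of $V(G') \setminus U$ is a cut-vertex of $G'$. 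Applying Theorem~\ref{THM:independence} to $G'$ with $N := N_1\!\left((s+1)n + 1\right)$ (or a similarly inflated constant), I obtain an induced $P_{(s+1)n}$, $H_{(s+1)n}^1$, or $K_{1,(s+1)n}$ in $G'$.

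The three cases run parallel to Cases~1--3 in the proof of Theorem~\ref{THM:inducedmatching}, but each ``expansion'' of a contracted vertex $u_i$ now reinserts a whole copy of $H_i$ rather than a single edge. The key structural point is the following uniformization step, which replaces the ad hoc Pigeonhole arguments of the earlier proof. When I expand a vertex $u_i \in U$ lying on a path (Case~1) or attached to a clique vertex $v_i$ (Cases~2--3), I must control how the reinserted copy of $H_i$ attaches back to its neighbor(s). To do this I first apply Ramsey's Theorem (or iterated Pigeonhole) so that all the relevant $H_i$ are isomorphic to one fixed graph $H \in \HH$, and then Pigeonhole again so that the set $X_i \subseteq V(H_i)$ of vertices of $H_i$ adjacent to the neighboring clique vertex is the same fixed subset $X \subseteq V(H)$ for all surviving indices. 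Having fixed $(H, X)$, I next thread a shortest path inside the attachment to the neighboring contracted vertex; because contracting a connected subgraph $H_i$ to $u_i$ records only that \emph{some} vertex of $H_i$ is adjacent to $u_j$, reinflating produces, at each such attachment, an $(H,X)$-broom of some length $l$, and the length is at most $s$ once I route through the shortest induced path and discard redundant ends exactly as in Case~1. This is precisely why the hypothesis $P_s \in \HH$ is needed: it bounds the broom length by guaranteeing a short path component is always available as a scaffold, so that the hairy graphs $H_n^l(H,X)$ and stars $S_n^l(H,X)$ that arise have $l \le s$.

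Carrying this out, Case~1 yields a long induced path in $G$, hence $P_n$, by the same neighbor-deletion trick used before (keeping a single end of each $H_i$ when possible, and otherwise routing through an induced path of $H_i$ of length at most $s-1$). Case~2 (the $H_{(s+1)n}^1$ subgraph) splits according to whether the independent set $I$ of the hairy graph meets $U$ in many vertices: if it does, expanding those vertices and uniformizing $(H,X)$ as above produces an induced $H_n^l(H,X)$; if instead $I \setminus U$ is large, I invoke Lemma~\ref{LEM:increasing} to grow fresh brooms off the clique, again giving a hairy graph. Case~3 (the star $K_{1,(s+1)n}$) produces $S_n^l(H,X)$ in exactly the analogous way, splitting on whether the center lies in $U$ and whether many leaves lie in $U$. \textbf{The main obstacle} I anticipate is the bookkeeping in the uniformization step: I must simultaneously fix the isomorphism type $H$, the attachment set $X$, and the broom length $l \le s$ across linearly many (in $n$) expansions, and verify that after reinflation the attaching edges between distinct reinserted copies are exactly those dictated by the target graph and \emph{no others}, i.e.\ that inducedness is preserved. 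The cleanest way to discharge this is to absorb all the finitely many choices of $(H, X, l)$ into the Ramsey/Pigeonhole constant $N$ at the outset (the number of such triples is bounded by a function of $\HH$ and $s$ alone), reducing everything to a single monochromatic class before any geometry is done; once the class is fixed, inducedness follows from the induced-$\HH$-matching hypothesis and the cut-vertex normalization exactly as in the proof of Theorem~\ref{THM:inducedmatching}.
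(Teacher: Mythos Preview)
Your plan matches the paper's own argument, which the authors give only as a one-line sketch: contract each component of a large induced $\HH$-matching to a vertex, apply Theorem~\ref{THM:independence} to the resulting $G'$, and rerun the three cases of the proof of Theorem~\ref{THM:inducedmatching}, using Pigeonhole over the finitely many triples $(H,X,l)$ together with \emph{repeated} applications of Lemma~\ref{LEM:increasing}. Your uniformization over $(H,X,l)$ is the right bookkeeping device, and your handling of inducedness via the cut-vertex normalisation is fine.

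One step is misdescribed and, taken literally, would leave a gap. The bound $l\le s$ does \emph{not} come from ``routing through the shortest induced path'' inside a reinflated component $H_i$: in an $(H,X)$-broom the path of length $l-1$ lies entirely outside the copy of $H$, so nothing internal to $H_i$ controls $l$. The correct mechanism is iteration of Lemma~\ref{LEM:increasing}. In Cases~2 and~3 you must apply that lemma not once but up to $s-1$ times: at each new layer either a positive fraction of the fresh tips lie in $U$ (expand them, Pigeonhole to fix $(H,X)$, and stop with the current $l\le s$), or a positive fraction lie outside $U$, in which case those tips are cut-vertices of $G'$ but not of the current hairy/star subgraph, and the lemma lets you grow one more layer. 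If after $s-1$ iterations you still have many tips outside $U$, the whole $H_{n'}^{s}$ (respectively $S_{n'}^{s}$) you have built in $G'$ uses no vertex of $U$ and is therefore already an induced subgraph of $G$; each of its pendant paths on $s$ non-clique vertices is an induced $P_s$, so this graph is exactly $H_{n'}^{1}(P_s,\{\text{one endpoint}\})$ (respectively the star version). That fallback --- not any shortest-path routing --- is the role of the hypothesis $P_s\in\HH$.
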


Note that each of the guaranteed graphs also has induced $\HH$-matching number at least $n$, except $P_n$, which has induced $\HH$-matching number at least $n-s+1\over s+1$.

Theorem~\ref{THM:general} can be easily derived from the proof of Theorem~\ref{THM:inducedmatching}
by considering $G'$ (in the proof of Theorem~\ref{THM:inducedmatching}), which is the graph obtained from $G$ by 
contracting every component of a maximum induced $\HH$-matching of $G$ to a vertex, and then applying Ramsey's theorem and Lemma~\ref{LEM:increasing} repeatedly.

We remark that the case when $\HH$ does not contain a path cannot give a nice list of guaranteed graphs, since we cannot avoid all graphs obtained from attaching an arbitrary graph in $\HH$ to every pendent vertex of an arbitrary tree.

\end{document}